\newcommand{\inv}{^{-1}}
\newcommand{\wh}{\widehat}
\newtheorem{Thm}{Theorem}[section]
\newtheorem{Prop}[Thm]{Proposition}
\theoremstyle{definition}
\theoremstyle{remark}
\theoremstyle{remark}
\theoremstyle{remark}
\theoremstyle{remark}
\theoremstyle{remark}
\theoremstyle{remark}
\theoremstyle{remark}
\newtheorem*{Claim*}{Claim}}
\numberwithin{equation}{section}
\title[Contractibility of the orbit space of the $p$-subgroup complex]{Contractibility of the orbit space of the $p$-subgroup complex via Brown-Forman discrete Morse theory}
\author{Benjamin Steinberg}
\address[B.~Steinberg]{%
    Department of Mathematics\\
    City College of New York\\
    Convent Avenue at 138th Street\\
    New York, New York 10031\\
    USA}
\email{bsteinberg@ccny.cuny.edu}
\thanks{The author was supported by Simons Foundation Collaboration Grant, award number 849561.}
\date{\today}
\keywords{Subgroup complexes, Webb conjecture, discrete Morse theory}
\subjclass[2020]{20D30,05E18}
\begin{document}
\maketitle

\begin{abstract}
We give a simple proof that the orbit space of the $p$-subgroup complex of a finite group is contractible using Brown-Forman discrete Morse theory.  This result was originally conjectured by Webb and proved by Symonds.
\end{abstract}

\section{Introduction}
The order complex  $|\mathcal S_p(G)|$  of the poset $\mathcal S_p(G)$ of nontrivial $p$-subgroups of a finite group $G$ was introduced by Brown in~\cite{BrownSylow}.  The subject was then further developed and popularized by Quillen~\cite{Quillenposet}, and studied by many others, cf.~\cite{Webbsubgroupcomplex,subgroupcomplexesbook}.

There is a natural action of $G$ on $|\mathcal S_p(G)|$ by simplicial maps induced by the conjugation action on subgroups.  Webb conjectured~\cite{Webbsubgroupcomplex} that $|\mathcal S_p(G)|/G$ is contractible (he proved it is $\mathbb F_p$-acyclic).  The conjecture was proved by P.~Symonds~\cite{Symonds}.  Other proofs were given in~\cite{Bux,Linckelmann,Libman}.

Here we provide a simple proof that $|\mathcal S_p(G)|/G$ is contractible using Brown-Forman discrete Morse theory~\cite{brownrewriting,Forman}.  Like the proofs of Symonds~\cite{Symonds} and Bux~\cite{Bux}, we use the Robinson $p$-subgroup complex $\mathcal R_p(G)$, which is $G$-homotopy equivalent to $|\mathcal S_p(G)|$~\cite{ThevWebbGposet}.   The proof of Bux~\cite{Bux} uses discrete Morse theory in the sense of Bestvina-Brady~\cite{BestvinaBrady}, which at least to this author seems more involved than Brown-Forman discrete Morse theory.  Moreover, Bux's  proof is by minimal counterexample, and hence is of an inductive nature; this would appear to obscure the exact sequence of elementary collapses one needs to perform.  We construct a Morse matching~\cite{Chari} for a simplicial set, whose geometric realization is $\mathcal R_p(G)/G$, that has a single critical cell: the vertex corresponding to the conjugacy class of $p$-Sylow subgroups of $G$.  Since the geometric realization is a finite cell complex, the matching provides a recipe of how to perform a sequence of elementary collapses of the complex to a single vertex.     The only group theory used in the proof is the Sylow theorems and the fact that every subgroup of a finite $p$-group is subnormal.

\section{Discrete Morse theory}
The discrete Morse theory we use here was invented independently by Brown~\cite{brownrewriting} and Forman~\cite{Forman}.  We shall mostly follow Brown's approach, as he works with simplicial sets, which is our context as well.  But our terminology will to some extent match that of Forman's theory, and we use the language of Morse matchings~\cite{Chari}.

Let $X$ be a simplicial set.  A \emph{Morse matching} (collapsing scheme in~\cite{brownrewriting}) for $X$ consists of the following data:
\begin{enumerate}
\item A partition of the nondegenerate simplices (called cells here) of $X$ into three types: \emph{critical} (dubbed essential in~\cite{brownrewriting}), \emph{collapsible} and \emph{redundant}.
\item A bijection $c$ between the sets of redundant cells and collapsible cells such that, for each redundant $n$-cell $\tau$,  we have that $c(\tau)\in X_{n+1}$  and a distinguished index $\iota(\tau)\in \{0,\ldots, n+1\}$ such that $\tau = d_{\iota(\tau)}(c(\tau))$.
\end{enumerate}
The crucial condition required of a Morse matching is that if $D$ is the digraph whose vertex set consists of the redundant and collapsible cells and whose edges are of the form $\tau\to c(\tau)$ for $\tau$ redundant and $\sigma\to d_j(\sigma)$, $j\neq \iota(c\inv(\sigma))$, for $\sigma$ collapsible, then there is no infinite directed path in $D$.


To make our formulation closer to that of~\cite{brownrewriting}, let us call a directed path in $D$ \emph{alternating} if it starts at a redundant cell, and each edge either goes from a redundant cell to a collapsible cell or from a collapsible cell to a redundant cell.    We observe that $D$ has an infinite directed path if and only if it has an infinite alternating path.  To see this, note that any infinite directed path must use an edge from a redundant cell to a collapsible cell.  Indeed, there are no edges from a redundant cell to a redundant cell.  All edges between collapsible cells go from a cell of dimension $n$ to a cell of dimension $n-1$, and so no infinite directed path contains only edges of this type.  In particular, if there is an infinite path in $D$, then there is an infinite path starting at a redundant cell and that path must redundant cells infinitely often.

 Next we claim that if $\sigma$ and $\tau$ are redundant cells and there is a directed path from $\sigma$ to $\tau$, then $\dim \tau\leq \dim \sigma$, and equality holds if and only if the path is alternating.  It suffices by induction on the number of redundant cells in the path to consider the case that there are no redundant cells on this directed path between $\sigma$ and $\tau$.  Then the first edge of this path takes $\sigma$ to $c(\sigma)$, which has dimension one higher.  Note that $c(\sigma)\neq \tau$ as $c(\sigma)$ is collapsible.   Any edge starting at a collapsible cell goes to a cell of lower dimension, and hence all vertices visited on this path after  $c(\sigma)$ have dimension at most that of $\sigma$, and if at least two collapsible cells are visited (i.e., the path is not alternating), then $\dim \tau<\dim \sigma$.  We conclude that the redundant cells visited by an infinite directed path do not increase in dimension and hence they eventually all have the same dimension.  Thus if there is an infinite directed path in $D$, then there is an infinite directed path starting at a redundant cell that visits redundant cells, all of the same dimension, infinitely often.  But such a path must be alternating by the above claim.  In particular, it follows that if $X$ has finitely  many nondegenerate $n$-cells for all $n\geq 0$, then the condition imposed on $D$ is equivalent to acyclicity.  Note that $d_j(c(\tau))\neq \tau$ if $j\neq \iota(\tau)$ for $\tau$ redundant since otherwise there is the directed cycle $\tau\to c(\tau)\to\tau$, which gives rise to an infinite directed path.

We shall write $X_n^r$ for the set of redundant $n$-cells and $X_n^c$ for the set of collapsible $n$-cells.  The following result~\cite[Proposition~1]{brownrewriting} is the fundamental theorem of discrete Morse theory (see also~\cite{Forman,Chari}).

\begin{Thm}[Brown/Forman]\label{t:fund}
Let $X$ be a simplicial set.  Given a Morse matching for $X$, there is a quotient map $q\colon |X|\to Y$ of CW complexes, where the $n$-cells of $Y$ are in bijection with the critical $n$-cells of $X$ (with characteristic maps the compositions of $q$ with the corresponding characteristic maps to $|X|$), with $q$ a homotopy equivalence.
\end{Thm}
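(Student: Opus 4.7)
The plan is to build $q\colon |X|\to Y$ by performing one elementary collapse per redundant/collapsible pair, sequenced in a way compatible with the acyclicity of $D$; at the end only the critical cells remain, and they assemble into the claimed CW structure on $Y$.

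I would first isolate the atomic move. For a single pair $(\tau,c(\tau))$, the closed cell $\overline{c(\tau)}$ deformation retracts onto the union of its boundary faces other than $\tau$ by pushing through the interior of $\tau$. Because this retraction is stationary off $c(\tau)\cup\tau$, it extends to a deformation retraction of the ambient complex, and the induced quotient --- which removes $c(\tau)$ and $\tau$ from the list of cells and suitably modifies the attaching maps of any cell whose boundary crossed $\tau$ --- is a cellular homotopy equivalence. The task is then to iterate this move coherently so that what is left is exactly the critical cells.

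To schedule the collapses, I would first restrict to a finite subcomplex. There the no-infinite-path hypothesis is equivalent to acyclicity of $D$, so $D$ admits a topological sort. The edges $\tau\to c(\tau)$ and $\sigma\to d_j(\sigma)$ with $j\neq\iota(c\inv(\sigma))$ encode the dependencies between pairs: a directed path from $\tau$ in $D$ that passes through a redundant cell $\tau'$ means the attaching map of $c(\tau)$ uses $\tau'$, so the pair $(\tau,c(\tau))$ should only be collapsed after $(\tau',c(\tau'))$ has been. Processing pairs in an order respecting the sort makes each atomic move well-defined on the quotient built so far. For arbitrary $X$, the quotient $Y$ is obtained as a colimit over the finite subcomplexes of $X$, and the homotopy equivalence property of $q$ passes to the colimit by compactness: any compact subset of $|X|$ lies in a finite subcomplex, so $q$ induces isomorphisms on all homotopy groups.

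The main obstacle is verifying by induction along the topological sort that the invariant needed for the atomic move is genuinely maintained at each stage: $c(\tau)$ must still be a cell of the current quotient having $\tau$ among its boundary faces, and the other boundary faces of $c(\tau)$ must each either be critical or have already been dealt with. The acyclicity of $D$ is designed precisely so that every potential obstruction --- every other redundant cell appearing as a face of $c(\tau)$, or every appearance of $\tau$ as a face of some collapsible partner --- corresponds to an edge of $D$ pointing to a vertex that occurs earlier in the chosen order, and so has already been resolved.
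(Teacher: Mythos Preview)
The paper does not prove this theorem; it only records a one-sentence sketch and cites Brown and Forman.  That sketch, however, points to a \emph{build-up} argument: one adjoins the cells of $|X|$ in a carefully chosen order so that each collapsible cell $c(\tau)$ is attached at a moment when its partner $\tau$ is a \emph{free face} of the complex built so far, and then an honest elementary collapse removes the pair.  Your proposal takes the opposite, \emph{tear-down}, route, and this introduces a genuine gap.

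Your atomic move asserts that the retraction of $\overline{c(\tau)}$ onto the union of its other boundary faces ``extends to a deformation retraction of the ambient complex'' because it is ``stationary off $c(\tau)\cup\tau$''.  That inference fails precisely when $\tau$ is a face of some other cell $\rho\neq c(\tau)$: the retraction moves points of $\tau$, and those points lie in $\overline{\rho}$, so the map does not extend continuously by the identity.  Your scheduling does not repair this.  In fact it cannot: if $\rho$ is a \emph{critical} cell with $\tau$ as a face (nothing in the definition excludes this), then $\rho$ is never removed, and no ordering of the collapses will ever render $\tau$ free.  Even among matched cells the sort you describe runs in the wrong direction for freeness: to make $\tau$ free you would need to first remove every collapsible $\sigma'\neq c(\tau)$ having $\tau$ as a face, and such a $\sigma'$ gives an edge $\sigma'\to\tau$ in $D$, hence a path from $c^{-1}(\sigma')$ to $\tau$; by your rule you would then collapse $(\tau,c(\tau))$ \emph{before} $(c^{-1}(\sigma'),\sigma')$, the opposite of what is needed.

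The build-up viewpoint avoids all of this.  One attaches cells in an order refining both dimension and the partial order coming from $D$, adjoining each $c(\tau)$ immediately after $\tau$ and \emph{before} any other cell having $\tau$ as a face; at that instant $\tau$ is genuinely free, the elementary collapse is legitimate, and later cells are attached to the already-collapsed complex via the composed maps.  This is the mechanism the paper's sketch is pointing to, and it is what your inductive invariant should be tracking.
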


The proof uses the hypothesis on $D$ to organize the adjunction of cells so that when a collapsible cell is adjoined, one can perform an elementary collapse of it with the corresponding redundant $n$-cell as the free face.

\section{The orbit space of the $p$-subgroup complex}
Let $G$ be a finite group.  Brown's~\cite{BrownSylow} \emph{$p$-subgroup complex} $|\mathcal S_p(G)|$ is the order complex of the poset  $\mathcal S_p(G)$ of nontrivial $p$-subgroups of $G$ for a prime divisor $p$ of $|G|$.  The group $G$ acts on $\mathcal S_p(G)$ by order-preserving maps via conjugation and hence acts simplicially of $|\mathcal S_p(G)|$.  We shall consider the $G$-subcomplex of $\mathcal R_p(G)$ whose simplices consists of chains $P_0<\cdots<P_n$ such that $P_i\lhd P_n$ for $0\leq i\leq n$.  This complex was first studied by G.R.~Robinson (cf.~\cite{RobinsonComplex}) and was shown to be $G$-homotopy equivalent to $|\mathcal S_p(G)|$ by Th\'evanez and Webb~\cite[Theorem~2]{ThevWebbGposet}.  It follows that $|\mathcal S_p(G)|/G$ is homotopy equivalent to $\mathcal R_p(G)/G$.

We first define the simplicial set $R_p(G)$ whose $n$-simplices are all $n$-tuples $(P_0,\ldots, P_n)$ with $P_0\leq\cdots\leq P_n$ and $P_i\lhd P_n$ for all $0\leq i\leq n$.  The face maps are given by $d_i(P_0,\ldots, P_n) = (P_0,\ldots, \wh P_i,\ldots, P_n)$ (where $\wh P_i$ means omit $P_i$) and the degeneracies are given by $\sigma_i(P_0,\ldots, P_n) = (P_0,\ldots, P_i,P_i,\ldots, P_n)$.  The nondegenerate simplices are those of the form $(P_0,\ldots, P_n)$ with $P_0<\cdots<P_n$ and the geometric realization of $R_p(G)$ is $\mathcal R_p(G)$. This is the standard simplicial set associated to an ordered simplicial complex.

 The group $G$ acts on $R_p(G)$ by simplicial automorphisms via the action $g(P_0,\ldots, P_n) = (gP_0g\inv,\ldots, gP_ng\inv)$.  Therefore, we can form the quotient simplicial set $R_p(G)/G$.  Since geometric realization preserves colimits, we have that $|R_p(G)/G|\cong \mathcal R_p(G)/G$.  We write $[P_0,\ldots, P_n]$ for the orbit of $(P_0,\ldots, P_n)$ and we note that $[P_0,\ldots, P_n]$ is nondegenerate if and only if $P_0<\cdots <P_n$, and hence any face of a nondegenerate simplex is nondegenerate.  Moreover, no two faces of a simplex are identified in the geometric realization. However, the geometric realization need not be a simplicial complex because the cells are not determined by their vertices. 

Our goal is to define a Morse matching on $X:=R_p(G)/G$.  The Sylow theorems guarantee that there is only one orbit of $p$-Sylow subgroups.  We take as the unique  \emph{critical cell} the vertex $[P]$ with $P$ a $p$-Sylow subgroup of $G$.  If $P_0\leq \cdots \leq P_n$, put $N_G(P_0,\ldots, P_n) = \bigcap_{i=0}^nN_G(P_i)$.  We say that $[P_0,\ldots, P_n]$ is \emph{redundant} if $P_n$ is not a $p$-Sylow subgroup of $N_G(P_0,\ldots, P_n)$.  If $n\geq 1$, we say that $[P_0,\ldots, P_n]$ is \emph{collapsible} if $P_n$ is a $p$-Sylow subgroup of $N_G(P_0,\ldots, P_n)$.  One easily checks that being redundant or collapsible is independent of the choice of orbit representative. 

It is clear that the collection of critical, redundant and collapsible cells is pairwise disjoint and that any cell of dimension $n\geq 1$ is either redundant or collapsible.  If $P_0$ is a $p$-subgroup that is not $p$-Sylow, then there is a $p$-Sylow subgroup $P$ with $P_0<P$.  Then since any subgroup of a $p$-group is subnormal, it follows that $P_0<N_P(P_0)\leq N_G(P_0)$ and so $[P_0]$ is redundant. 
To finish our construction of a Morse matching, it remains to define $c$ and $\iota$ and check that the associated digraph $D$ has no infinite alternating path.

If $[P_0,\ldots, P_n]$ is redundant, then we can find by the Sylow theorems a $p$-Sylow subgroup $P$ of $N_G(P_0,\ldots, P_n)$ with $P_n<P$.  Put $c([P_0,\ldots, P_n]) =[P_0,\ldots, P_n,P]$ and $\iota([P_0,\ldots, P_n]) = n+1$.   Note that $P_0,\ldots, P_n\lhd P$.

\begin{Prop}\label{p:c.well.def}
The mapping $c$ is a well-defined bijection between redundant cells and collapsible cells.
\end{Prop}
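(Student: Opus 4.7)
The plan is to check, in order: (i) well-definedness of $c$ with respect to both the Sylow choice and the orbit representative, (ii) that $c$ lands in collapsible cells, (iii) injectivity of $c$, and (iv) surjectivity of $c$ onto collapsible cells. The first three are short; (iv) is where real group-theoretic work is needed, and it is the only place where the subnormality of subgroups of $p$-groups plays a role.

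For (i), suppose $P, P'$ are $p$-Sylow subgroups of $N := N_G(P_0, \ldots, P_n)$ containing $P_n$. By Sylow applied inside $N$, there is $g \in N$ with $gPg\inv = P'$. Since $g \in N \le N_G(P_i)$ for each $i$, conjugation by $g$ fixes each $P_i$ and carries $(P_0, \ldots, P_n, P)$ to $(P_0, \ldots, P_n, P')$; these therefore give the same orbit. Independence of orbit representative follows from $G$-equivariance, using $hN_G(P_0, \ldots, P_n)h\inv = N_G(hP_0h\inv, \ldots, hP_nh\inv)$ together with the fact that conjugation sends $p$-Sylow subgroups to $p$-Sylow subgroups. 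For (ii), note $P \le N_G(P_0, \ldots, P_n, P) \le N_G(P_0, \ldots, P_n)$, and $P$ is a $p$-Sylow of the latter, hence a $p$-Sylow of the intermediate subgroup, so $c([P_0, \ldots, P_n])$ is collapsible. For (iii), the identity $d_{n+1}(c([P_0, \ldots, P_n])) = [P_0, \ldots, P_n]$ exhibits a left inverse for $c$.

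For (iv), given a collapsible cell $[P_0, \ldots, P_n]$, I would show its top face $[P_0, \ldots, P_{n-1}]$ is redundant and that $c$ carries it back to $[P_0, \ldots, P_n]$. Since $P_i \lhd P_n$ for $i < n$, we have $P_n \le N_G(P_0, \ldots, P_{n-1})$ and $P_n > P_{n-1}$, so $P_{n-1}$ is not a $p$-Sylow of $N_G(P_0, \ldots, P_{n-1})$ and redundancy holds. The main obstacle is the sharper assertion that $P_n$ is itself a $p$-Sylow of the larger normalizer $N_G(P_0, \ldots, P_{n-1})$; once this is shown, well-definedness permits choosing $P := P_n$ in the recipe defining $c([P_0, \ldots, P_{n-1}])$, giving $c([P_0, \ldots, P_{n-1}]) = [P_0, \ldots, P_n]$ as desired. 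Suppose for contradiction that $Q$ is a $p$-subgroup of $N_G(P_0, \ldots, P_{n-1})$ with $P_n < Q$. Since every subgroup of the finite $p$-group $Q$ is subnormal, $P_n$ is subnormal in $Q$, so $N_Q(P_n) > P_n$. But $N_Q(P_n) \le N_G(P_0, \ldots, P_{n-1}) \cap N_G(P_n) = N_G(P_0, \ldots, P_n)$ is then a $p$-subgroup of $N_G(P_0, \ldots, P_n)$ strictly larger than $P_n$, contradicting the hypothesis that $P_n$ is a $p$-Sylow of $N_G(P_0, \ldots, P_n)$.
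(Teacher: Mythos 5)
Your proof is correct and follows essentially the same route as the paper's: well-definedness via Sylow conjugacy inside $N_G(P_0,\ldots,P_n)$ and equivariance, collapsibility of the image from the inclusion of normalizers, injectivity from $d_{n+1}\circ c=\mathrm{id}$, and surjectivity by showing the top subgroup of a collapsible cell is already Sylow in the smaller normalizer, using subnormality in $p$-groups to derive a contradiction. No substantive differences.
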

\begin{proof}
Suppose that $[P_0,\ldots, P_n]$ is redundant and $P$ is a $p$-Sylow subgroup of $N_G(P_0,\ldots, P_n)$ containing $P_n$.
First observe that if $Q$ is any other $p$-Sylow of $N_G(P_0,\ldots,P_n)$ containing $P_n$, then there is $g\in N_G(P_0,\ldots, P_n)$ with $gQg\inv=P$ by the Sylow theorems.  Then $g(P_0,\ldots, P_n,Q)=(P_0,\ldots, P_n,P)$, and so $[P_0,\ldots, P_n,Q]=[P_0,\ldots, P_n,P]$.  Now suppose  $h(P_0',\ldots, P_n')=(P_0,\ldots, P_n)$.  Then $N_G(P_0,\ldots, P_n) = hN_G(P_0',\ldots, P_n')h\inv$, and so if $Q$ is a $p$-Sylow subgroup of $N_G(P_0',\ldots, P_n')$ containing $P_n'$, then $hQh\inv$ is a $p$-Sylow subgroup of $N_G(P_0,\ldots, P_n)$ containing $P_n$, whence, by the previous argument, $[P_0,\ldots, P_n,P]= [P_0,\ldots, P_n,hQh\inv]=[P_0',\ldots, P_n',Q]$.  To complete the proof that $c$ is well defined, we need that $[P_0,\ldots, P_n,P]$ is collapsible.  But since $N_G(P_0,\ldots, P_n,P)\subseteq N_G(P_0,\ldots, P_n)$ and $P$ is a $p$-Sylow subgroup of the latter group, we conclude that $P$ is $p$-Sylow in the former, and hence $[P_0,\ldots, P_n,P]$ is collapsible. Note $c$ is injective as $\tau=d_{n+1}(c(\tau))$, for $\tau \in X_n^r$.

It remains to show that every collapsible $(n+1)$-cell $[P_0,\ldots, P_n,P_{n+1}]$ with $n\geq 0$ is in the image of $c$.   First we claim that $P_{n+1}$ is a $p$-Sylow subgroup of $N_G(P_0,\ldots, P_n)$.  If this is not the case, then by the Sylow theorems, there is a $p$-Sylow subgroup $Q$ of $N_G(P_0,\ldots, P_n)$ with $P_{n+1}<Q$.  Since $P_{n+1}$ is subnormal in $Q$, we must have $P_{n+1}<N_Q(P_{n+1})\subseteq Q\cap N_G(P_{n+1})\subseteq N_G(P_0,\ldots, P_{n+1})$, contradicting that $P_{n+1}$ is a $p$-Sylow subgroup of $N_G(P_0,\ldots, P_{n+1})$.  It now follows that $[P_0,\ldots, P_n]$ is redundant and $c([P_0,\ldots, P_n]) = [P_0,\ldots, P_{n+1}]$, completing the proof.
\end{proof}

All that remains is to show that $D$ has no infinite alternating path.  Define $h([P_0,\ldots, P_n]) = \log_p |P_n|$.  Clearly, $h$ is well defined and bounded by the $p$-adic valuation $\nu_p(|G|)=:t$.  Note that if $\tau$ is a redundant cell, then $h(\tau)<h(c(\tau))$ by construction.  On the other hand, if $\sigma$ is an $(n+1)$-cell, then $h(d_i(\sigma))=h(\sigma)$ for any $0\leq i<n+1$.   Since $\iota(\tau)=n+1$ for any redundant $n$-cell $\tau$,  if $\sigma=[P_0,\ldots, P_{n+1}]$ is collapsible, then $h(\sigma)=h(\sigma')$ whenever there is an edge $\sigma\to \sigma'$ in $D$.  It follows that  no alternating path in $D$ between redundant cells has length greater than $2(t-1)$, and hence $D$ has no infinite alternating path.   Thus our Morse matching collapses $\mathcal R_p(G)/G$ to the vertex corresponding to the conjugacy class of $p$-Sylow subgroups by Theorem~\ref{t:fund}.  Hence we have proved the following.

\begin{Thm}[Symonds~\cite{Symonds}]
If $G$ is a finite group, then $|\mathcal S_p(G)|/G$ is contractible.
\end{Thm}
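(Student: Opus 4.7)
The plan is to apply the fundamental theorem of discrete Morse theory (Theorem~\ref{t:fund}) to a carefully constructed Morse matching on the simplicial set $X := R_p(G)/G$, whose geometric realization is $\mathcal R_p(G)/G \simeq |\mathcal S_p(G)|/G$. The goal is to arrange the matching so that there is a single critical cell, necessarily a vertex, which will force $|X|$ to collapse to a point.

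First I would take the critical cell to be the vertex $[P]$ where $P$ is a $p$-Sylow subgroup of $G$; this is canonical because all $p$-Sylow subgroups are $G$-conjugate. For all other nondegenerate simplices $[P_0,\ldots,P_n]$, I would classify by comparing $P_n$ with the $p$-Sylow subgroups of the simultaneous normalizer $N_G(P_0,\ldots,P_n) = \bigcap_i N_G(P_i)$: call the cell \emph{redundant} if $P_n$ is not $p$-Sylow there, and \emph{collapsible} if $P_n$ is $p$-Sylow there. The subnormality of any subgroup of a $p$-group (applied inside a Sylow subgroup of $G$ containing $P_0$) ensures that every non-Sylow vertex is redundant, so $[P]$ really is the only critical cell.

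The matching $c$ sends a redundant $[P_0,\ldots,P_n]$ to $[P_0,\ldots,P_n,P]$ where $P$ is any $p$-Sylow subgroup of $N_G(P_0,\ldots,P_n)$ containing $P_n$ strictly, with distinguished face index $\iota = n+1$. The main obstacle is checking that this is well defined and hits every collapsible cell, since $P$ is only determined up to conjugation within $N_G(P_0,\ldots,P_n)$ and the orbit representative is not canonical either. I would handle well-definedness by using Sylow's conjugacy theorem twice (once to absorb the choice of $P$, once to absorb the choice of orbit representative $h\cdot(P_0',\ldots,P_n')$). For surjectivity onto collapsible cells, I would argue by contradiction: if $P_{n+1}$ in a collapsible cell $[P_0,\ldots,P_{n+1}]$ were not $p$-Sylow in $N_G(P_0,\ldots,P_n)$, then subnormality would produce a strictly larger $p$-subgroup normalizing $P_{n+1}$, contradicting that $[P_0,\ldots,P_{n+1}]$ is collapsible. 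Injectivity is free from $\tau = d_{n+1}(c(\tau))$.

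Finally I would rule out infinite alternating paths in the associated digraph $D$ by introducing the height function $h([P_0,\ldots,P_n]) = \log_p|P_n|$, bounded above by $\nu_p(|G|)$. By construction $h$ strictly increases on edges $\tau\to c(\tau)$, while on edges $\sigma\to d_j(\sigma)$ with $j\neq \iota(c^{-1}(\sigma)) = \dim\sigma$ the value of $h$ is preserved (the top subgroup is not deleted). Hence along any alternating path $h$ is nondecreasing and strictly increases across each redundant-to-collapsible step, giving a uniform length bound of $2(\nu_p(|G|)-1)$. With no infinite alternating paths, Theorem~\ref{t:fund} produces a homotopy equivalence $|X|\to Y$ where $Y$ has a single $0$-cell and no higher cells, so $|\mathcal S_p(G)|/G \simeq \mathcal R_p(G)/G \simeq \{*\}$.
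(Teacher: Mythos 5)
Your proposal is correct and follows essentially the same route as the paper: the same simplicial set $R_p(G)/G$, the same classification of cells by whether $P_n$ is $p$-Sylow in $N_G(P_0,\ldots,P_n)$, the same matching with $\iota=n+1$, and the same height function $\log_p|P_n|$ to bound alternating paths. The only detail worth making explicit is that $[P_0,\ldots,P_n,P]$ is indeed a cell of the Robinson complex because $P\leq N_G(P_i)$ forces $P_i\lhd P$ for all $i$.
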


\def\malce{\mathbin{\hbox{$\bigcirc$\rlap{\kern-7.75pt\raise0,50pt\hbox{${\tt
  m}$}}}}}\def\cprime{$'$} \def\cprime{$'$} \def\cprime{$'$} \def\cprime{$'$}
  \def\cprime{$'$} \def\cprime{$'$} \def\cprime{$'$} \def\cprime{$'$}
  \def\cprime{$'$} \def\cprime{$'$}

\end{document}